\renewcommand{\AA}{\mathcal{A}}
\newcommand{\RX}{\mathcal{R}(X)}
\newcommand{\ZZ}{\mathbb{Z}}
\newcommand{\groupid}{e}
\newcommand{\emptyword}{\lambda}
\newtheorem{theorem}{Theorem}
\newtheorem{corollary}[theorem]{Corollary}
\newtheorem{lemma}[theorem]{Lemma}
\theoremstyle{definition}
\newtheorem{remark}[theorem]{Remark}
\begin{document}

\title{No positive cone in a free product is regular}
\author{Susan Hermiller}
\address{Department of Mathematics, University of Nebraska, Lincoln, NE 68588-0130, USA}
\email{hermiller@unl.edu}
\author{Zoran \v{S}uni\'c}
\address{Department of Mathematics, Hofstra University, Hempstead, NY 11549, USA}
\email{zoran.sunic@hofstra.edu}

\begin{abstract}
We show that there exists no left order on the free product of two nontrivial, finitely generated, left-orderable groups such that the corresponding positive cone is represented by a regular language. Since there are orders on free groups of rank at least two with positive cone languages that are context-free (in fact, 1-counter languages), our result provides a bound on the language complexity of positive cones in free products that is the best possible within the Chomsky hierarchy. It also provides a strengthening of a result by Crist{\'o}bal Rivas which states that the positive cone in a free product of nontrivial, finitely generated, left-orderable groups cannot be finitely generated as a semigroup. As another illustration of our method, we show that the language of all geodesics (with respect to the natural generating set) that represent positive elements in a graph product of groups defined by a graph of diameter at least 3 cannot be regular. 
\end{abstract}

\keywords{left-orderable groups, free products, graph products, regular languages}

\subjclass[2010]{20E06, 06F15, 68Q45}

\maketitle


\section{Introduction}

\subsection{Basic definitions, background, and notation}
A  total left \emph{order} on a group $G$ is a total order  $\leq$ on $G$ compatible with left multiplication; that is, $g \leq g'$ implies $hg \leq hg'$ for all $g,g',h \in G$.  Throughout this paper we assume that all group orders are total left orders (and we will not use the adjectives total or left again).

A group $G$ is called \emph{orderable} if $G$ admits an order.  Saying that $(G,\leq)$ is an \emph{ordered group} means that we consider the group $G$ along with some specific order $\leq $ on $G$.

The \emph{positive cone} of an ordered group $(G,\leq)$ is the set $G_+=\{g \in G\mid e < g\}$ of positive elements with respect to $\leq$. The positive cone is a subsemigroup of $G$ and $G$ is 
partitioned as $G = G_+ \sqcup \{\groupid\} \sqcup G_-$, where $\groupid$ is the identity element of $G$ and $G_-=(G_+)^{-1}$ is the set of negative elements in $G$.

Let $X$ be a finite set of symbols and $\phi: X^* \to G$ a surjective homomorphism from the free monoid $X^*$ to the group $G$.   We sometimes suppress $\phi$ from the notation and denote $\phi(w)$ by 
$\overline{w}$ and $\phi(L)$ by $\overline{L}$.

A \emph{language} over $X$ is a subset $L \subset X^*$.  Let $\emptyword$ denote the empty word. The class of \emph{regular} languages over $X$ is the closure of the finite languages under the 
operations of finite union, finite intersection, complementation, concatenation, and Kleene star.  Regular languages are the languages accepted by finite state automata.  The reverse of any regular 
language is regular, and for any finite set $Y$ and monoid homomorphism $\alpha:X^* \rightarrow Y^*$ the image $\alpha(L)$ of any regular language $L$ over $X$ is a regular language over $Y$.

We say that a language $L \subseteq X^*$ \emph{represents} a subset $S$ of $G$ if $\phi(L)=S$. A subset $S$ of $G$ represented by a regular language is often called a rational subset of $G$. A language over $X$ that represents the positive cone $G_+$ of an ordered group $(G,\leq)$ is called a 
\emph{positive cone language} for $(G,\leq)$ over $X$ (and with respect to $\phi$).


\subsection{Regular positive cones}

We are interested in the question of which finitely generated groups admit a representation of a positive cone by a regular language. Several examples of such groups have been shown to exist.

As a first example, note that there are orders on $\ZZ^2$ that have a regular positive cone language. Indeed, let $X=\langle x,y,x^{-1},y^{-1}\rangle$. The regular language
\[
 \{x^n y^m \mid n \geq 1,~m \in \ZZ\} \cup \{y^m \mid m \geq 1\}
\]
represents the positive cone of the ``lexicographic order'' on $\ZZ^2 = \langle \overline{x},\overline{y}\rangle$.

Both the Dehornoy order~\cite{dehornoy:order-braid} and the Dubrovina-Dubrovin order~\cite{dubrovina-d:order-braid} on the braid group $B_n$, with $n \geq 3$, admit positive cone languages that are regular.  Moreover, the positive cone for the latter is finitely generated as a semigroup.

Rourke and Wiest~\cite{rourke-w:automatic} provided regular positive cone languages for certain orders on mapping class groups of compact surfaces with a finite number of punctures and non-empty boundary.

On the negative side, Calegari~\cite{calegari:problems3} showed that no hyperbolic 3-manifold group admits a positive cone language that is simultaneously regular and geodesic. His argument applies only to languages consisting of (quasi-)geodesics because it uses the fact that hyperbolic 3-manifold groups contain quasi-geodesically embedded copies of the free group of rank 2. Note that the issue is subtle, since groups admitting regular positive cone languages may contain free subgroups. Indeed, braid groups along with the Dehornoy orders and the Dubrovina-Dubrovin orders provide such examples. 


\subsection{The main result}

It is well known that the free product of two orderable groups is orderable~\cite{vinogradov:ordered}. Rivas~\cite{rivas:free-product} showed that the space of all orders on a free product $G=A*B$, 
where $A$ and $B$ are nontrivial, finitely generated, orderable groups, is uncountable and has the structure of a Cantor set. Navas~\cite{navas:ordered-dynamics} pointed out that, in the case of the 
free group $F_k$ of finite rank $k$, $k \geq2$, the same conclusion follows from the earlier work of McCleary~\cite{mccleary:free-lattice-ordered}, and also provided another proof in this case. In our main result 
we show that, despite such an abundance of available orders, a positive cone language in a free product is never regular.

\begin{theorem}\label{t:not-regular-free}
Let $A$ and $B$ be two nontrivial, finitely generated, orderable groups. There exists no order on $G=A*B$ such that its positive cone is represented by a regular language (for any finite alphabet $X$, 
any homomorphism $\phi:X^* \to G$, and any choice of a language representing the positive cone).
\end{theorem}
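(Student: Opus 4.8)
The plan is to argue by contradiction. Suppose some order $\leq$ on $G=A*B$ has positive cone $G_+=\overline{L}$ for a regular language $L\subseteq X^*$, fix a deterministic automaton $M$ accepting $L$ with pumping length $p$, and keep in mind the two facts that the hypothesis $\overline{L}=G_+$ provides: the \emph{inclusion} $\overline{L}\subseteq G_+$ (every word of $L$ maps to a positive element) and the \emph{covering} $\overline{L}\supseteq G_+$ (every positive element has a representative in $L$). The inclusion will be pushed through pumping, while the covering will be used near $\groupid$. At the outset I would reduce to the essential configuration: pick nontrivial $a\in A$ and $b\in B$, observe that the order restricts to the factors so that after replacing $a,b$ by their inverses we may assume $a,b\in G_+$, and work inside the free product of cyclic groups $\langle a\rangle * \langle b\rangle$ together with its induced order and normal form.

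The engine of the argument is the following elementary observation. If $\alpha,\beta,\gamma\in G$ satisfy $\alpha\beta^i\gamma\in G_+$ for all $i\geq 0$, then, since $\alpha\beta^i\gamma>\groupid$ is equivalent to $\beta^i>\alpha^{-1}\gamma^{-1}$, the element $\beta$ cannot simultaneously be negative and have powers that are cofinal downward. Now apply the pumping lemma to any long $z\in L$: writing $z=uvw$ with $|v|\geq 1$ and $uv^iw\in L$ for all $i\geq 0$, and setting $\alpha=\overline{u},\ \beta=\overline{v},\ \gamma=\overline{w}$, the inclusion gives exactly $\alpha\beta^i\gamma\in G_+$ for all $i$. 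Consequently every nontrivial cycle of $M$ that lies on an accepting path produces a $\beta$ which is either $\geq\groupid$ or \emph{not downward-cofinal}; a dual statement holds for the rational set $G_-=(G_+)^{-1}$. The crucial subtlety, and the reason the theorem is delicate, is the escape clause ``not downward-cofinal'': an \emph{infinitesimal} (order-bounded) cycle factor evades the engine entirely. This is precisely the mechanism by which $\ZZ^2$ acquires a regular positive cone in the introductory example, where the $y^{m}$ part is a negative-but-bounded cycle. So the heart of the proof must be to rule this escape out in the free product, i.e.\ to force the existence, inside \emph{any} representing $L$, of a negative pumpable factor whose powers \emph{are} cofinal downward.

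This is where I expect the main obstacle to lie, and where the free-product hypothesis and the lack of bi-invariance (conjugation can change sign) must be used. Concretely, I would construct an explicit family $g_n\in G_+$ of \emph{unbounded syllable length} lying in a thin region just above $\groupid$, built by alternating the two factors $\langle a\rangle$ and $\langle b\rangle$ so that $g_n$ is positive only because of a fine, scale-by-scale cancellation rather than because of a dominant leading syllable. By the covering property each $g_n$ is $\overline{z_n}$ for some $z_n\in L$, and controlling the word metric forces $|z_n|\to\infty$, hence forces a nontrivial cycle into the accepting run. The key technical claim is then that the free-product normal form pins this cycle to a syllable position at which repeating it alters the \emph{leading syllable} of the normal form, so that the associated $\beta$ is negative with downward-cofinal powers; the engine lemma then yields $\alpha\beta^i\gamma\notin G_+$ for large $i$, contradicting $\overline{L}\subseteq G_+$. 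Establishing this normal-form control for an \emph{arbitrary} order simultaneously with the cofinality of the offending factor is the genuinely hard step; I anticipate it is driven by the fact that a free product order exhibits infinitely many interacting Archimedean scales (mirroring the Cantor-set structure of its space of orders), which no finite automaton can track, whereas $\ZZ^2$ has only finitely many. Finally I would record that this statement strengthens Rivas's theorem, since a positive cone finitely generated as a semigroup by $\{c_1,\dots,c_k\}$ is trivially represented by the regular language $\{y_1,\dots,y_k\}^{*}\setminus\{\emptyword\}$ under $y_i\mapsto c_i$.
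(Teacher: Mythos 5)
Your proposal is a plan whose decisive step is missing, and you say so yourself: the claim that some accepting cycle of the automaton must produce a pumpable factor $\beta$ that is negative with downward-cofinal powers (together with the construction of the family $g_n$ of positive elements ``just above $\groupid$'' that is supposed to force such a cycle) is essentially the whole content of the theorem, and nothing in the proposal establishes it. It is also doubtful that this route goes through as stated: the pumping lemma gives you no control over where in the free-product normal form the cycle sits, and for an arbitrary left order you have essentially no control over which elements have downward-cofinal powers. There is a smaller but genuine error in your ``engine'' as well: for a \emph{left} order, $\alpha\beta^i\gamma>\groupid$ is equivalent to $\beta^i\gamma>\alpha^{-1}$, not to $\beta^i>\alpha^{-1}\gamma^{-1}$ --- you cannot cancel $\gamma$ on the right --- so the dichotomy you extract concerns the sequence $\beta^i\gamma$ rather than $\beta$ alone, which weakens the engine further.

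The paper avoids pumping altogether. Its key lemma (Lemma~\ref{l:not-regular}) says: if $T\subseteq G_+$ is unbounded above and every element of $T^{-1}$ is represented by a \emph{prefix} of a word of $L$, then $L$ is not regular. The point is that from the state reached after reading a prefix representing $t^{-1}$, where $t$ exceeds every element of the ball of radius $k-1$ ($k$ the number of states), one can reach an accept state by some word $v$ of length at most $k-1$, giving $\groupid<t^{-1}\overline{v}$, i.e.\ $t<\overline{v}$, a contradiction. The free-product structure is then used only to verify the hypothesis with $T=G_+$, by showing $G\subseteq\overline{Pref(L)}$: for any $g$, choose a letter $x$ that increases the syllable length of $g$; one of $g\overline{x}^{\pm1}g^{-1}$ is positive, hence represented by some $w\in L$, and a normal-form argument (Lemma~\ref{l:free-product-prefix}) shows that a prefix of $w$ already represents $g$. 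If you want to salvage your approach, the conjugates $g\overline{x}^{\pm1}g^{-1}$ are exactly the positive elements of unbounded syllable length you were looking for, but the contradiction should be extracted from short completions of prefixes rather than from a pumped cycle.
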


The orders defined on free groups $F_k$ in~\cite{sunic:free-lex,sunic:from-oriented} have context-free positive cone languages. More specifically, the sets of freely reduced words representing the 
elements in their positive cones are 1-counter languages (the stack of the push-down automaton uses a one-letter alphabet). In light of Theorem~\ref{t:not-regular-free}, it follows that the orders on 
$F_k$ from~\cite{sunic:free-lex,sunic:from-oriented} are, at least from the language theoretic point of view, among the simplest ones possible in the context of free products.


\subsection{Other results and remarks}

The main result immediately provides the following corollary.

\begin{corollary}\label{c:nfg}
Let $A$ and $B$ be two nontrivial, finitely generated, orderable groups. There exists no order on $G=A*B$ such that its positive cone is finitely generated as a semigroup. 
\end{corollary}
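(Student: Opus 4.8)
The plan is to prove the contrapositive reduction: a finitely generated positive cone always gives rise to a regular positive cone language, so a finitely generated positive cone in $A * B$ would directly contradict Theorem~\ref{t:not-regular-free}. Suppose, toward a contradiction, that some order $\leq$ on $G = A * B$ has a positive cone $G_+$ that is finitely generated as a semigroup, say by $S = \{g_1, \dots, g_n\}$.

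The first step is to build a finite alphabet and a surjective homomorphism matching the framework of the theorem. I would take $X = \{x_1, \dots, x_n, \bar x_1, \dots, \bar x_n\}$ with $\phi(x_i) = g_i$ and $\phi(\bar x_i) = g_i^{-1}$. Since $G = G_+ \sqcup \{\groupid\} \sqcup G_-$ and $G_- = (G_+)^{-1}$, every element of $G$ is either a product of elements of $S$ (if it lies in $G_+$), the empty product (if it equals $\groupid$), or a product of elements of $S^{-1}$ (if it lies in $G_-$); hence the submonoid of $G$ generated by $\phi(X)$ is all of $G$, and $\phi$ is surjective, exactly as the theorem requires.

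The second step is to exhibit the regular language. Let $Y = \{x_1, \dots, x_n\} \subseteq X$ and set $L = Y^+ = Y Y^*$, the set of nonempty words in the positive letters. This language is regular, being the concatenation of the finite language $Y$ with its Kleene star. By construction $\phi(L)$ is exactly the subsemigroup of $G$ generated by $S$, which equals $G_+$ by assumption. Thus $L$ is a regular positive cone language for $(G, \leq)$ over $X$, contradicting Theorem~\ref{t:not-regular-free} and completing the argument.

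Since the whole argument is a short reduction, there is no serious obstacle; the only point requiring care is the surjectivity hypothesis on $\phi$, which is why the alphabet must include symbols for the inverses $g_i^{-1}$ in addition to symbols for the semigroup generators themselves.
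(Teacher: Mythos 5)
Your proposal is correct and is exactly the reduction the paper intends: the paper states that Corollary~\ref{c:nfg} follows immediately from Theorem~\ref{t:not-regular-free}, and the implicit argument is precisely that a finite semigroup generating set $\{g_1,\dots,g_n\}$ for $G_+$ yields the regular positive cone language $\{x_1,\dots,x_n\}^+$ over a suitable alphabet. Your added care about including inverse letters so that $\phi$ is surjective (using $G=G_+\sqcup\{\groupid\}\sqcup G_-$) is the right detail to check and is handled correctly.
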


This corollary was already established by Rivas~\cite{rivas:free-product} by using a very different approach.  In particular, 
Linnell~\cite{linnell:retracted},~\cite[Proposition~1.8]{navas:ordered-dynamics} observed that if a positive cone on a finitely generated group $G$ is a finitely generated semigroup, then the 
corresponding order is isolated in the space of all orders. Since the space of orders on $G=A*B$ has no isolated points~\cite{rivas:free-product}, no order on $G$ has a finitely generated positive 
cone. This is an excellent approach, but it seems that, in general, it is not easy to establish that there are no isolated points in the space of orders of some group (including the case of the free 
group). Also, the ``no isolated points'' approach is not helpful in establishing stronger results in the spirit of Theorem~\ref{t:not-regular-free}. Namely, the space of orders on $\ZZ^2$ is a Cantor set, but, as we already mentioned, there are orders on $\ZZ^2$ that have a regular positive cone language. 

Another approach to Corollary~\ref{c:nfg} is provided by Kielak~\cite{kielak:ends} who proved that a finitely generated group with infinitely many ends cannot be a fraction group of a proper, finitely generated subsemigroup. 

The proof of Theorem~\ref{t:not-regular-free} is based on the following lemma.

\begin{lemma}\label{l:not-regular}
Let $(G,\leq)$ be a finitely generated ordered group and let $L \subseteq X^*$ be a language representing the positive cone $G_+$. Denote by $Pref(L)$ the language of prefixes of the words in $L$. If 
there exists a set of positive elements $T$ in $G$ such that $T$ is unbounded (with respect to $\leq$) from above and
\[
 T^{-1} \subseteq \overline{Pref(L)},
\]
then $L$ is not regular.
\end{lemma}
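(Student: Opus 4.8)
The plan is to argue by contradiction: assuming $L$ is regular, I will extract from the hypotheses a single accepting computation that can be ``re-routed'' through a mismatched prefix, producing a word of $L$ whose image is not positive. Since $L$ is regular, fix a complete deterministic finite state automaton $M$ accepting $L$, with state set $Q$, and for a word $w \in X^*$ write $\delta(w) \in Q$ for the state reached from the start state after reading $w$. The hypothesis $T^{-1} \subseteq \overline{Pref(L)}$ lets me choose, for each $t \in T$, a word $p_t \in Pref(L)$ with $\overline{p_t} = t^{-1}$, and then, because $p_t$ is a prefix of a word in $L$, a completion $s_t \in X^*$ with $p_t s_t \in L$.

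First I would record the order-theoretic content of $p_t s_t \in L$: since $\phi(L) = G_+$, we have $\overline{p_t s_t} = t^{-1}\,\overline{s_t} > \groupid$, equivalently $\overline{s_t} > t$. The crucial point is that the state $\delta(p_t)$ depends only on the automaton, so the assignment $t \mapsto \delta(p_t)$ sends the infinite set $T$ (infinite because an unbounded-from-above subset of a totally ordered set has no maximum, hence cannot be finite) into the finite set $Q$. Because $T = \bigcup_{q \in Q} \{t \in T : \delta(p_t) = q\}$ is a finite union and $T$ is unbounded from above, at least one of these fibers, say $T_q$, must itself be unbounded from above; otherwise each fiber would admit an upper bound, and the maximum of these finitely many bounds would bound all of $T$.

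With such a $q$ fixed, I would pick any $t_1 \in T_q$ and then, using that $T_q$ is unbounded from above, a second element $t_2 \in T_q$ with $t_2 > \overline{s_{t_1}}$. Since $\delta(p_{t_1}) = \delta(p_{t_2}) = q$ and $p_{t_1} s_{t_1} \in L$, determinism of $M$ forces $p_{t_2} s_{t_1} \in L$ as well. But then $\overline{p_{t_2} s_{t_1}} = t_2^{-1}\,\overline{s_{t_1}} > \groupid$, which by left-invariance of $\leq$ gives $\overline{s_{t_1}} > t_2$, contradicting the choice $t_2 > \overline{s_{t_1}}$.

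The step I expect to require the most care is the passage from ``some fiber is infinite'' to ``some fiber is unbounded from above'': the naive pigeonhole principle yields only an infinite fiber, which need not be cofinal in $T$, so the argument must instead play the finiteness of $Q$ against the order directly, as in the previous paragraph. Everything else is the standard prefix-swapping consequence of determinism, combined with the single observation that left-invariance of $\leq$ converts the positivity of $t^{-1}\overline{s}$ into the comparison $\overline{s} > t$.
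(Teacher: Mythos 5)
Your proof is correct, and it implements the key idea --- that regularity forces the completion of a prefix representing $t^{-1}$ to be ``independent of $t$'' in a way that unboundedness of $T$ cannot tolerate --- by a different mechanism than the paper. The paper uses a single element: it picks one $t \in T$ dominating the ball $B$ of radius $k-1$ (where $k$ is the number of states), and observes that from the state reached after a prefix $w$ with $\overline{w}=t^{-1}$ some accepting state is reachable by a word $v$ of length at most $k-1$, whence $t < \overline{v} \in B$, a contradiction. You instead run a two-element state-collision argument: pigeonholing $T$ over the fibers of $t \mapsto \delta(p_t)$, extracting an \emph{unbounded} fiber, and swapping the completion $s_{t_1}$ onto a much larger $t_2$ in the same fiber. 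Both exploit exactly the finiteness of the state set together with the unboundedness of $T$; yours avoids any mention of word length or balls in the Cayley graph, at the cost of needing the refinement you correctly flag, namely that a merely infinite fiber would not suffice and one must propagate unboundedness (not just infinitude) through the finite union of fibers. The paper's version sidesteps that issue entirely by quantifying over a single sufficiently large $t$, which is why it is a few lines shorter, but your argument is complete and sound as written.
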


In our next result we consider graph products of groups which contain subgroups that are free products of pairs of vertex subgroups.  (In the following we use the convention that edges of the defining graph represent commuting relations). The graph product construction preserves many properties of groups; for example, Chiswell~\cite{chiswell:ordering-graph-products} has shown that graph products preserve orderability, and Loeffler, Meier and Worthington~\cite{loeffler-m-w:graph-products} have shown that the graph product preserves regularity of the set of all geodesics, using the union of the vertex group  generating sets.  One important family of graph products is the class of right-angled Artin groups, which are graph products of cyclic groups of infinite order; in their case both the language of geodesics and the language of all geodesics representing elements in the positive cone are regular for each vertex group. In order to illustrate that the applicability of Lemma~\ref{l:not-regular} goes beyond free products, we modify the proof of Theorem~\ref{t:not-regular-free} to show that regularity of the geodesic positive cone languages of orders on the vertex groups cannot be preserved in the graph product for graphs of diameter at least 3, when using the same union of the vertex group generating sets.

\begin{theorem}\label{t:not-regular-ag}
Let $\Gamma$ be a finite, simple graph of diameter at least 3 and $V$ its vertex set.  For each $v \in V$ let $G_v$ be a nontrivial, orderable group with generating set $\overline{Y_v}$ for a finite 
set $Y_v$, and let $X = \sqcup_{v \in V} (Y_v \sqcup Y_v^{-1})$.
Let $G_\Gamma$ be the associated graph product group, and suppose that $\phi: X^* \to G_\Gamma$ has the property that $\phi(x^{-1})=\phi(x)^{-1}$ for all $x \in X$.   Let $Geo$ be the language of all 
geodesics for $G_\Gamma$ with respect to $X$. There exists no order $\leq$ on $G_\Gamma$ such that the positive cone language
\[
 Geo_+ = \{ \ w \in Geo \mid w \textup{ represents a positive element in } G_\Gamma \ \}
\]
is regular.
\end{theorem}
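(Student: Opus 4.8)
The plan is to deduce the statement from Lemma~\ref{l:not-regular} applied to the language $L = Geo_+$, taking as the unbounded set simply $T = (G_\Gamma)_+$, the whole positive cone. Since an ordered group has no largest element, $T$ is automatically unbounded above, and $T^{-1} = (G_\Gamma)_-$. Thus the entire problem reduces to proving the single containment $(G_\Gamma)_- \subseteq \overline{Pref(Geo_+)}$. I first unwind what this means: a prefix of a geodesic is again a geodesic, so $\overline{Pref(Geo_+)}$ is exactly the set of elements $\overline{w}$ for which some geodesic $w$ extends to a geodesic $wv$ with $e < \overline{wv}$. Hence it suffices to show that every negative element $g$ admits a geodesic representative $w$ that is a prefix of a geodesic representing a positive element.

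To produce such extensions I mirror the proof of Theorem~\ref{t:not-regular-free}. Because $\Gamma$ has diameter at least $3$, I first fix two vertices $p$ and $q$ at distance at least $3$ in $\Gamma$; then $p$ and $q$ are non-adjacent and have no common neighbor, so the vertex subgroups $G_p$ and $G_q$ generate their free product $G_p * G_q$ inside $G_\Gamma$, and any word that alternates between nontrivial syllables of $G_p$ and of $G_q$ is geodesic with respect to $X$. These far-apart vertices play the role that the two free factors play in Theorem~\ref{t:not-regular-free}: they supply a generator that can be spliced into a geodesic without being absorbed under the graph-product commutation relations.

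The construction is then as follows. Given a negative element $g$, choose a geodesic representative $w$ for $g$. Using the separation provided by $p$ and $q$, select a vertex $f$ and a generator $c \in Y_f \sqcup Y_f^{-1}$ (so $\overline{c} \neq e$) such that $c$ cannot be absorbed into $w$ on its left; this is a condition on the normal form of $g$, controllable because $f$ can be taken non-adjacent to the syllables of $g$ that reach its right end. Since $(G_\Gamma, \leq)$ has no largest element, I may then pick an element $x_0$ with $\overline{c}\,x_0 > g^{-1}$, and, after prepending a further fresh separating letter to $x_0$ if necessary, arrange that $x_0$ has a geodesic representative $v_0$ that does not cancel $c$ on the right. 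Then $u = w\,c\,v_0$ is geodesic, its value $\overline{u} = g\,\overline{c}\,x_0$ satisfies $e < \overline{u}$ precisely because $\overline{c}\,x_0 > g^{-1}$, and $w$ is a prefix of $u$ with $\overline{w} = g$. Thus $g \in \overline{Pref(Geo_+)}$, and Lemma~\ref{l:not-regular} shows that $Geo_+$ is not regular.

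The main obstacle is the splicing step: I must guarantee, for every negative $g$ and every sufficiently large $x_0$, a separating letter $c$ whose insertion keeps the concatenation geodesic on both sides simultaneously. This is exactly where the hypothesis that $\Gamma$ has diameter at least $3$ is essential---it furnishes vertices with enough mutual independence (non-adjacent, with no common neighbor) that a generator can sit between the two halves without being shuffled into either. For diameter at most $2$ this independence is unavailable, which is consistent with the fact that the conclusion genuinely requires diameter at least $3$. Once the separating vertex is in hand, verifying the two geodesic conditions is a routine normal-form computation in the graph product, so the substance of the argument is concentrated in establishing the existence of that vertex.
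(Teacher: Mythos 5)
Your reduction to Lemma~\ref{l:not-regular} with $T = (G_\Gamma)_+$ is the right frame, and your choice of two vertices at distance at least $3$ (hence non-adjacent and with no common neighbor) matches the paper's starting point. But the core construction has a genuine gap. You propose to append to a geodesic $w$ for $g$ a letter $c$ followed by a geodesic $v_0$ for an element $x_0$ chosen so that $\overline{c}\,x_0 > g^{-1}$. Two things go wrong. First, order-theoretically: when you ``prepend a further fresh separating letter'' to $x_0$ you replace $x_0$ by $\overline{d}\,x_0$, and since the order is only \emph{left}-invariant, $x_0 > \overline{c}^{-1}g^{-1}$ yields $\overline{d}\,x_0 > \overline{d}\,\overline{c}^{-1}g^{-1}$, which says nothing about whether $\overline{d}\,x_0 > \overline{c}^{-1}g^{-1}$; the inequality you need is not preserved by the repair. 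Second, geometrically: $x_0$ is an arbitrary element selected only for its size in the order, so you have no control over its geodesic normal forms. Even if $c$ itself cannot be absorbed into $w$, the leading syllables of $v_0$ may commute past $c$ (any generator of a vertex adjacent to the vertex of $c$ does) and cancel into $w$, so $wcv_0$ need not be geodesic; the diameter hypothesis does not rescue this for a single, order-theoretically prescribed $x_0$. You flag this splicing step as ``the main obstacle'' but do not close it.

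The missing idea is the one that makes the paper's proof work: you never need a large positive element at all. Take $x \in Y_t$ and $y \in Y_u$ with $\overline{x},\overline{y} \ne \groupid$, where $t,u$ are at distance at least $3$ and (after possibly swapping $t$ and $u$) the projection $\pi_t(w)$ either ends in $\$$ or is empty. The two words $wxyx^{-1}w^{-1}$ and $wxy^{-1}x^{-1}w^{-1}$ are verified to be geodesics via the criterion $\pi_v(\cdot) \in Geo_v(\$\, Geo_v)^*$ for all $v$ --- the absence of a common neighbor of $t$ and $u$ is exactly what this verification uses --- and they represent the mutually inverse nontrivial elements $\overline{w}\,\overline{x}\,\overline{y}^{\pm 1}\,\overline{x}^{-1}\,\overline{w}^{-1}$. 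One of the two is therefore positive, so $w \in Pref(Geo_+)$ for \emph{every} geodesic $w$, with no reference to the order beyond trichotomy. Your approach instead tries to satisfy an order-theoretic condition and a normal-form condition simultaneously for one prescribed element, and as written it does not succeed.
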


\begin{remark}
Let $X = Y \sqcup Y^{-1}$, for some finite set $Y$, and let $\phi:X^* \to G$ be a homomorphism such that $\phi(x^{-1})=\phi(x)^{-1}$ for all $x \in X$. Let $L_+ \subseteq X^*$ be a regular language 
representing the positive cone in the ordered group $(G,\leq)$. Since the homomorphic image of the reverse of a regular language is regular, the language $(L_+)^{-1}$ of formal inverses of the words 
in $L_+$ is also regular, and hence the union $L = L_+ \sqcup \{\emptyword\} \sqcup (L_+)^{-1}$
is a regular language representing the entire group $G$ (i.e., $\phi(L)=G$).
Thus, every regular language $L_+$ over $X$ representing the positive cone $G_+$ is induced from some regular language $L$ representing $G$, by
\[
 L_+ = \{ \ w \in L \mid w \textup{ represents a positive element in } G \ \}.
\]

Using this viewpoint, Theorem~\ref{t:not-regular-ag} implies, in particular, that no regular positive cone language can be induced from the regular language of all geodesics in a right-angled Artin 
group on a graph $\Gamma$, when the diameter of the graph $\Gamma$ is at least 3.
\end{remark}


\section{Proofs}

We first observe that the question of representability of a subset of a group by a regular language is independent of the choice of the alphabet $X$ and the homomorphism $\phi:X^* \to G$.

\begin{lemma}\label{l:independence}
Let $X$ and $Y$ be two finite alphabets and $\phi_X:X^* \to G$ and $\phi_Y:Y^* \to G$ be two surjective homomorphisms to the finitely generated group $G$. Let $S$ be a subset of $G$ represented by a 
regular language $L \subseteq X^*$. Then there exists a regular language $L'\subseteq Y^*$ representing $S$.
\end{lemma}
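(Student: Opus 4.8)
The plan is to obtain $L'$ as the image of $L$ under a suitable ``letter-substitution'' homomorphism $\psi: X^* \to Y^*$, relying on the closure property recorded in the introduction that monoid homomorphisms preserve regularity. The only ingredient I need beyond this closure property is the surjectivity of $\phi_Y$, which allows me to realize each generator in $X$ by a word over $Y$.

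First I would use surjectivity of $\phi_Y$ to choose, for each symbol $x \in X$, a word $w_x \in Y^*$ satisfying $\phi_Y(w_x) = \phi_X(x)$. Since $X^*$ is the free monoid on $X$, the assignment $x \mapsto w_x$ extends uniquely to a monoid homomorphism $\psi: X^* \to Y^*$.

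The key step is to verify the commutation relation $\phi_Y \circ \psi = \phi_X$. Both $\phi_Y \circ \psi$ and $\phi_X$ are monoid homomorphisms from $X^*$ to $G$, and by construction they agree on every generator $x \in X$, since $\phi_Y(\psi(x)) = \phi_Y(w_x) = \phi_X(x)$. Because $X^*$ is free on $X$, two homomorphisms that agree on the generators coincide on all of $X^*$.

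Finally I would set $L' = \psi(L)$. This language is regular, since the image of a regular language under a monoid homomorphism is again regular. Applying $\phi_Y$ and invoking the commutation relation then gives $\phi_Y(L') = \phi_Y(\psi(L)) = \phi_X(L) = S$, so $L'$ represents $S$, as desired. I do not expect a serious obstacle here; the argument is essentially a diagram chase, and the only point requiring attention is to invoke surjectivity of $\phi_Y$ (rather than of $\phi_X$) when selecting the words $w_x$. Note that the finite generation of $G$ is in fact automatic from the existence of the surjection $\phi_X$ from a free monoid on a finite alphabet, so it plays no independent role in the argument.
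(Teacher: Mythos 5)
Your proposal is correct and matches the paper's proof essentially verbatim: both choose, for each $x \in X$, a word $w_x \in Y^*$ with $\phi_Y(w_x) = \phi_X(x)$ via surjectivity of $\phi_Y$, extend to a monoid homomorphism $X^* \to Y^*$, and take the (regular) image of $L$. Your explicit verification that $\phi_Y \circ \psi = \phi_X$ on all of $X^*$ is a detail the paper leaves implicit, but the argument is the same.
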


\begin{proof}
For every letter $x \in X$, choose a word $w_x$ over $Y$ such that $\phi_Y(w_x) = \phi_X(x)$.  Define a monoid homomorphism $\alpha:X^* \rightarrow Y^*$ by $\alpha(x)=w_x$ for all $x \in X$.
The homomorphic image $\alpha(L)$ is a regular language over $Y$ representing $S$.
\end{proof}

Because of the independence on the alphabet, we can always work with a group alphabet $X = Y \sqcup Y^{-1}$, for some finite set $Y$, and $\phi:X^* \to G$ with the property that 
$\phi(x^{-1})=\phi(x)^{-1}$ for all $x \in X$. Moreover, by Benois' Lemma~\cite{benois:free} (see~\cite{bartholdi-s:rational} for an exposition), we can always make a further simplification and assume that a regular language representing a set of elements of $G$ lies within the set $\RX$ of freely reduced group words over $X$.  

\begin{lemma}[Benois' Lemma]\label{l:reduced}
Let $X = Y \sqcup Y^{-1}$ be a group alphabet, for some finite set $Y$, and let $\phi:X^* \to G$ be a surjective homomorphism to a group $G$ such that $\phi(x^{-1})=\phi(x)^{-1}$ for all $x \in X$. 
Let $L$ be a regular language over $X$ that represents $S$ in $G$. Then there exists a regular language $L' \subseteq \RX$ representing $S$. Moreover, one such language is the language of freely reduced 
words of the words in $L$. 
\end{lemma}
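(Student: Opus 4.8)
The plan is to reduce the statement to a single regularity claim and then settle that claim by an automaton saturation argument. First I would isolate the one algebraic fact that does all the work: since $\phi(x)\phi(x^{-1}) = \phi(x)\phi(x)^{-1} = \groupid$, deleting a factor $xx^{-1}$ from a word never changes its image under $\phi$. Consequently, if $\rho\colon X^* \to \RX$ denotes the purely syntactic free reduction map, then $\phi(\rho(w)) = \phi(w)$ for every $w \in X^*$. Setting $L' := \{\rho(w) \mid w \in L\}$, we get $L' \subseteq \RX$ by the definition of free reduction and $\phi(L') = \phi(L) = S$, so $L'$ represents $S$ and is exactly the language named in the ``moreover'' clause. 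Thus the only thing left to prove is that $L'$ is regular; note that $\RX$ itself is regular, being cut out by the finite list of forbidden factors $\{xx^{-1} \mid x \in X\}$.

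The subtle point is that $\rho$ is not a monoid homomorphism, so I cannot simply invoke closure of regular languages under homomorphic images. Instead I would establish a combinatorial description of which reduced words arise as reductions of words in $L$. The key structural observation is that free reduction pairs the cancelled positions of a word into a non-crossing matching, and that a matched pair can never straddle a surviving (uncancelled) letter. Hence if $w$ reduces to $v = a_1 \cdots a_n$, then $w$ factors as $w = u_0\,a_1\,u_1\,a_2\cdots a_n\,u_n$, where each block $u_i$ consists only of positions matched among themselves, i.e.\ each $u_i$ lies in the two-sided Dyck language $D = \{u \in X^* \mid \rho(u) = \emptyword\}$; conversely, inserting words of $D$ around and between the letters of a reduced word leaves its reduction unchanged. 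This yields the characterization: a reduced word $v = a_1\cdots a_n$ lies in $L'$ if and only if there exist $u_0,\dots,u_n \in D$ with $u_0 a_1 u_1 \cdots a_n u_n \in L$.

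Finally, I would convert this characterization into a finite automaton. Fix an automaton $\mathcal{M} = (Q,X,\delta,q_0,F)$ recognizing $L$ and define the finite relation $E \subseteq Q \times Q$ by declaring $(p,q) \in E$ exactly when some word of $D$ labels a path from $p$ to $q$ in $\mathcal{M}$. As a subset of the finite set $Q\times Q$ this relation simply exists, which is all the statement requires; for concreteness one may compute it as the least relation containing the diagonal, closed under composition, and closed under the rule that an $x$-edge followed by an $E$-related pair followed by an $x^{-1}$-edge is again in $E$. Build $\mathcal{M}'$ on the same states by making $p \xrightarrow{a} q$ a transition whenever $(p,p') \in E$ and $p' \xrightarrow{a} q$ in $\mathcal{M}$ for some $p'$, with start state $q_0$ and accepting set $\{q \mid (q,f)\in E \text{ for some } f\in F\}$. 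Tracing an accepting path of $\mathcal{M}'$ against the definition of $E$ shows that $\mathcal{M}'$ recognizes precisely $\{a_1\cdots a_n \mid \exists\, u_0,\dots,u_n \in D,\ u_0 a_1 \cdots a_n u_n \in L\}$, so by the previous paragraph $L' = \mathcal{L}(\mathcal{M}') \cap \RX$ is an intersection of two regular languages and hence regular.

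I expect the genuine content, and the step most easily gotten wrong, to be the combinatorial lemma of the second paragraph, specifically the claim that cancellation induces a non-crossing matching whose arcs never jump over an uncancelled letter; this is what legitimizes slicing $w$ at its surviving letters into Dyck blocks. Everything else, namely the $\phi$-invariance of free reduction, the regularity of $\RX$, and the saturation bookkeeping, is routine once that structural fact is in hand.
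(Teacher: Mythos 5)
Your proof is correct. Note that the paper does not actually prove this lemma --- it is quoted as Benois' Lemma with citations to Benois and to the Bartholdi--Silva exposition --- and your argument is essentially the standard proof found in those sources: saturate an automaton for $L$ by adding a shortcut between any two states joined by a path whose label freely reduces to the empty word, then intersect the resulting regular language with the regular set $\RX$ of freely reduced words. The only point worth making explicit is that your non-crossing-matching step tacitly relies on the confluence of free reduction (so that $\rho$ is well defined and each block $u_i$ can be reduced independently of the rest of the word); this is standard and does not affect correctness.
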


\begin{proof}[Proof of Lemma~\ref{l:not-regular}]
By way of contradiction, assume that $L$ is regular and let $\AA$ be an automaton on $k$ states accepting $L$.

Consider the ball $B$ of radius $k-1$ in $G$ with respect to $\overline{X}$. Since $T$ is unbounded from above, there exists an element $t \in T$ that is greater (with respect to the order $\leq$) 
than every element in $B$. Then $t^{-1} \in T^{-1}$, and so there exists a word $w$ in $Pref(L)$ representing $t^{-1}$. Now $w \in Pref(L)$ implies that there exists a word $u$ such that $wu \in L$. 
Since the word $wu$ is accepted by the automaton $\AA$ on $k$ states, there must be a word $v$ of length at most $k-1$, such that the word $wv$ is also accepted by the automaton. Therefore 
$\groupid<\overline{wv} = t^{-1}\overline{v}$, which implies that $t < \overline{v}$. This is impossible since $\overline{v}$ is an element in $B$.
\end{proof}

Our proof of Theorem~\ref{t:not-regular-free} also relies on the structure of words representing elements in a free product.  Let $A$ and $B$ be nontrivial groups. For each $g \in A*B$, the 
\emph{reduced factorization} of $g$ is the unique expression of $g$ in the form $g=a_1b_1 \cdots a_mb_m$ where $a_1 \in A$, $a_i \in A \setminus \{\groupid\}$ for all $i \ge 2$, $b_i \in B \setminus 
\{\groupid\}$ for all $i \le m-1$, and $b_m \in B$.  The nontrivial elements $a_i,b_i$ in this factorization are the \emph{syllables} of $g$. Similarly for each word $w$ over a partitioned alphabet 
$Z_A \sqcup Z_B$, the \emph{$Z_A/Z_B$ factorization} of $w$ is the unique expression of the form $w=u_1v_1 \cdots u_nv_n$ where $u_1 \in Z_A^*$, $u_i \in Z_A^* \setminus \{\emptyword\}$ for all $i \ge 
2$, $v_i \in Z_B^* \setminus \{\emptyword\}$ for all $i \le n-1$, and $v_n \in Z_B^*$.

\begin{lemma}\label{l:free-product-prefix}
Let $A$ and $B$ be nontrivial groups with generating sets $\overline{Y_A}$ and $\overline{Y_B}$, respectively, where  $Y_A$ and $Y_B$ are finite sets, and let $G=A*B$ and $X=Y_A \sqcup Y_B \sqcup 
Y_A^{-1} \sqcup Y_B^{-1}$.  Suppose that $\phi: X^* \to G$ has the property that $\phi(x^{-1})=\phi(x)^{-1}$ for all $x \in X$.  If $w \in X^*$ and $\phi(w)$ has a reduced factorization 
$\phi(w)=a_1b_1 \cdots a_mb_m$, then for all $i$ the elements of $A*B$ with reduced factorization $a_1b_1 \cdots a_i\hat b_i$ with $\hat b_i \in \{b_i,\groupid\}$ are represented by prefixes of the 
word $w$.
\end{lemma}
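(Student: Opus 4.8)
The plan is to induct on the length of $w$, peeling off one letter at a time and controlling how the reduced factorization of the image changes under a single multiplication by an element of one free factor. Since $X = (Y_A \sqcup Y_A^{-1}) \sqcup (Y_B \sqcup Y_B^{-1})$ and $\phi(x^{-1})=\phi(x)^{-1}$, every individual letter $x \in X$ satisfies $\phi(x) \in A$ or $\phi(x) \in B$. Hence if $w = x_1 \cdots x_\ell$, then each prefix value $\phi(x_1 \cdots x_k)$ is obtained from the previous one $\phi(x_1 \cdots x_{k-1})$ by right multiplication by a single element of $A$ or of $B$. It is convenient to reformulate the goal: writing the reduced factorization of $\phi(w)$ as $a_1 b_1 \cdots a_m b_m$, the elements $a_1 b_1 \cdots a_i \hat b_i$ of the statement are precisely the partial products of the reduced syllable sequence of $\phi(w)$ taken at each syllable boundary (with the identity $\groupid = \phi(\emptyword)$ accounting for the degenerate boundaries, where $a_1$ or $b_m$ may be trivial). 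So it suffices to show that every such syllable-boundary partial product is the image of a prefix of $w$.

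The engine of the induction is the following elementary structural fact about free products. If $s_1 \cdots s_r$ is a reduced word (a sequence of nontrivial syllables from alternating factors) and $c \in A \cup B$, then the reduced word of $(s_1 \cdots s_r)\,c$ has the form $s_1 \cdots s_j\, t$, where $0 \le j \le r$, the prefix $s_1 \cdots s_j$ is an initial segment of $s_1 \cdots s_r$, and $t$ is a (possibly empty) single syllable. I would verify this by a short case analysis according to whether $c$ lies in the same free factor as the last syllable $s_r$: if not, $c$ is appended as a new syllable ($j = r$); if so, the product $s_r c$ either replaces $s_r$ as a modified last syllable ($j = r-1$) or cancels it entirely ($j = r-1$, with $t$ empty). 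The consequence I need is then immediate: every syllable-boundary partial product of $(s_1 \cdots s_r)c$ other than its full value $(s_1 \cdots s_r)c$ is already a syllable-boundary partial product of $s_1 \cdots s_r$, because the new word agrees with the old one on the common initial segment $s_1 \cdots s_j$.

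With this in hand the induction is brief. For $w = \emptyword$ we have $\phi(w) = \groupid = \phi(\emptyword)$ and there is nothing to prove. For the inductive step write $w = w' x$ with $c = \phi(x) \in A \cup B$, so $\phi(w) = \phi(w')\,c$. Applying the structural fact to the reduced word of $\phi(w')$, each syllable-boundary partial product of $\phi(w)$ is either a syllable-boundary partial product of $\phi(w')$ --- hence, by the inductive hypothesis, the image of a prefix of $w'$, and therefore of $w$ --- or equals $\phi(w)$ itself, the image of the full prefix $w$. This establishes the claim for $w$ and closes the induction.

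The only step demanding care is the bookkeeping inside the structural fact, namely matching the three cases (append, modify, cancel) against the asymmetric convention for the reduced factorization $a_1 b_1 \cdots a_m b_m$, in which $a_1$ and $b_m$ are allowed to be trivial. The cancellation case is the one where a partial product appears to be ``lost,'' but it causes no difficulty: the shortened word is an initial segment of the previous one, so all of its syllable-boundary partial products already occurred at an earlier prefix, and the ever-present empty prefix $\emptyword$ representing $\groupid$ absorbs the degenerate cases at the two ends of the factorization.
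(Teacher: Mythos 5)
Your proof is correct, and it takes a genuinely different route from the paper's. The paper works non-inductively: it writes the $Z_A/Z_B$ factorization $w = u_1v_1\cdots u_nv_n$ into maximal blocks of letters from one factor, observes that the reduced factorization of $\phi(w)$ is obtained by deleting trivial block images and merging contiguous blocks from the same factor, and then reads off the desired prefix as the one ending at the last block contributing to the syllable $b_i$ (or $a_i$). You instead induct on the length of $w$, using the one-step fact that right multiplication of a reduced word by a single element of $A\cup B$ either appends, modifies, or cancels the last syllable, so that every syllable-boundary partial product of the new element other than its full value was already one of the old element. Both arguments are elementary and both correctly reduce the lemma to the observation that the elements $a_1b_1\cdots a_i\hat b_i$ are exactly the initial-segment products of the syllable sequence (with $\groupid$ absorbing the degenerate ends). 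Your induction avoids the index bookkeeping $j_1 < \cdots < j_{\ell_j} \le j_1' < \cdots$ that the paper needs to align blocks with syllables, at the cost of stating and checking the three-case multiplication lemma; the paper's version is more direct in that it exhibits the prefix explicitly at a block boundary, whereas yours produces it implicitly through the induction. One small point worth making explicit in a final write-up: a letter $x\in X$ may have $\phi(x)=\groupid$, so the one-step fact should be stated to cover $c=\groupid$ as well (trivially, with $j=r$ and $t$ empty); your argument accommodates this without change.
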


\begin{proof}
We prove this for the element $g'=a_1b_1 \cdots a_ib_i$ with $b_i \neq \groupid$; the case that $\hat b_i=\groupid$ is nearly identical.  Let $Z_A=Y_A \sqcup Y_A^{-1}$ and $Z_B=Y_B \sqcup Y_B^{-1}$, 
and write the $Z_A/Z_B$ factorization of $w$ as $w=u_1v_1 \cdots u_nv_n$.

Another reduced factorization of $\phi(w)$ can be obtained from the product $(\overline{u_1})(\overline{v_1}) \cdots (\overline{u_n})(\overline{v_n})$ by finitely many applications of the following operation:  Remove a factor $\overline{u_j}$ or $\overline{v_j}$ that is the trivial element, and replace the resulting product of two contiguous factors $(\overline{u_j})(\overline{u_{k}})$ from the 
same factor group by a single factor $(\overline{u_ju_k})$ (and similarly, replace $(\overline{v_j})(\overline{v_{k}})$ by $(\overline{v_jv_k})$).  Since $\phi(w)$ only admits one reduced 
factorization, the result of applying this operation until no further reduction can occur is the product $a_1b_1 \cdots a_mb_m$.

If $m=n$, then no instance of this operation can occur, and $\overline{u_j}=a_j$ and $\overline{v_j}=b_j$ for all $j$. In this case the prefix $w'=u_1v_1 \cdots u_iv_i$ of $w$ satisfies $\phi(w')=g'$.

Otherwise we have $m > n$, and for at least one index $k$ a trivial element $\overline{u_k}$ or $\overline{v_k}$ is removed and the resulting contiguous factors from the same factor group are 
combined.  The syllables of $\phi(w)$ obtained from this process can be written as $a_j=(\overline{u_{j_1} \cdots u_{j_{\ell_j}}})$ and $b_j=(\overline{v_{j_1'} \cdots v_{j_{\ell_j'}'}})$ where 
$\ell_j,\ell_j' \ge 1$ and $j_1 < \cdots < j_{\ell_j} \le j_1' < \cdots < j_{\ell_j'}'$.  In this case the prefix $w'=u_1v_1 \cdots u_{i_{\ell_i'}'}v_{i_{\ell_i'}'}$ of $w$ satisfies $\phi(w')=g'$.
\end{proof}

\begin{proof}[Proof of Theorem~\ref{t:not-regular-free}]
By way of contradiction, assume that $G$ admits an order $\leq$ with a regular positive cone language $L$.

In light of Lemma~\ref{l:independence}, we may assume that $L\subseteq X^*$, where $X=Y_A \sqcup Y_B \sqcup Y_A^{-1} \sqcup Y_B^{-1}$, $Y_A$ and $Y_B$ are finite sets, $\overline{Y_A}$ generates $A$ 
and $\overline{Y_B}$ generates $B$, and $\phi: X^* \to G$ has the property that $\phi(x^{-1})=\phi(x)^{-1}$ for all $x \in X$. In light of Lemma~\ref{l:not-regular} it is sufficient to prove that $G 
\subseteq \overline{Pref(L)}$.

Let $g$ be an element of $G$. Since $G$ is the free product of two nontrivial groups, there exists a letter $x \in X$ such that the syllable length of $g\overline{x}$ is strictly greater than the 
syllable length of $g$. The elements $g\overline{x}g^{-1}$ and $g\overline{x}^{-1}g^{-1}$ form a pair of nontrivial, mutually inverse elements in $G$. Therefore, at least one of them is positive. 
Without loss of generality, assume that $g\overline{x}g^{-1}$ is positive.  There exists a word $w \in L$ representing $g\overline{x}g^{-1}$. By our choice of $x$, the reduced factorization of 
$g\overline{x}g^{-1}$ is the product (i.e., concatenation) of the reduced factorization of $g$, $\overline{x}$, and the reduced factorization of $g^{-1}$, and so Lemma~\ref{l:free-product-prefix} 
shows that there exists a prefix $w'$ of $w$ that represents $g$. Since $w' \in Pref(L)$ and $\overline{w'}=g$, we see that $g \in \overline{Pref(L)}$. Thus, $G \subseteq \overline{Pref(L)}$.
\end{proof}

\begin{proof}[Proof of Theorem~\ref{t:not-regular-ag}]
Let $t$ and $u$ be vertices of $\Gamma$ at distance at least 3 in $\Gamma$.  Note that this distance constraint implies that there is no vertex $v$ of $\Gamma$ that is adjacent to both $t$ and $u$.

Lemma~\ref{l:not-regular} shows that it is sufficient to prove that $Geo \subseteq Pref(Geo_+)$.

For each vertex $v$ of the graph $\Gamma$, let $Geo_v$ denote the set of geodesic words in the vertex group $G_v$ over the generators $Y_v^{\pm 1}$, and let $Z_v$ be the union of the sets $Y_{v'}$ over all vertices $v'$ adjacent to $v$ in $\Gamma$.  Define a monoid homomorphism $\pi_v:X^* \rightarrow (Y_v^{\pm 1} \cup \{\$\})^*$, where $\$$ denotes a letter not in $X$, by defining
\[
\pi_v(a) := \begin{cases}
a & \mbox{if } a \in Y_v^\pm \\
\$& \mbox{if } a \in X \setminus(Y_v^\pm \cup Z_v^\pm) \\
\emptyword & \mbox{if } a \in Z_v^\pm.\\
\end{cases}
\]
That is, the map $\pi_v$ records all of the occurrences of generators of $G_v$, as well as all of the occurrences (replaced by $\$$) of generators of vertex groups that do not commute with $G_v$ and 
hence do not allow letters from $G_v$ on either side to interact.  A word $w$ over $X$ is a geodesic for $G_\Gamma$ with respect to $X$ if and only if for all $v \in V$, $\pi_v(w) \in Geo_v(\$ 
Geo_v)^*$ (see, for example,~\cite[Proposition~3.3]{ciobanu-h:conjugacy-growth}).

Now let $w$ be any word in $Geo$. If $\pi_t(w)$ ends with a letter $a$ in $Y_t^{\pm 1}$, then we can write $w=w_1aw_2$ such that no letter of $Y_u^{\pm 1}$ lies in $w_2$, and so the word 
$\pi_u(w)=\pi_u(w_1)\$\pi_u(w_2)$ ends with a nonempty string in $\$^*$.  Similarly if $\pi_u(w)$ ends with a letter in $Y_u^{\pm 1}$ then $\pi_t(w)$ ends with $\$$.  By swapping the roles of $t$ and 
$u$ if necessary, we may assume that $\pi_t(w)$ either ends with $\$$ or is the empty word.

Let $x \in Y_t \cap Geo_t$ and $y \in Y_u \cap Geo_u$ (that is, neither $\overline{x}$ nor $\overline{y}$ is $\groupid$), and let $\tilde w \in X^*$ be the word $\tilde w=wxyx^{-1}w^{-1}$.  Then 
$\pi_v(\tilde w)=\pi_v(w)\pi_v(xyx^{-1})\pi_v(w)^{-1}$ where $\pi_v(w)^{-1}$ is the formal inverse of $\pi_v(w)$ in $(Y_v^{\pm 1} \cup \{\$\})^*$, with $\$^{-1}=\$$, and satisfies $\pi_v(w)^{-1} \in 
Geo_v(\$ Geo_v)^*$.  If $v \notin \{t,u\}$, then since $v$ cannot be adjacent to both $t$ and $u$ we have $\pi_v(xyx^{-1})=\$^i$ for some $i \in \{1,2,3\}$.  For the case that $v=t$, the word 
$\pi_t(w)$ ends with $\$$, the word $\pi_t(w)^{-1}$ begins with $\$$, and $\pi_t(xyx^{-1})=x\$x^{-1} \in Geo_t\$Geo_t$.  And in the case that $v=u$ we have $\pi_u(xyx^{-1})=\$y\$ \in \$Geo_u\$$.  
Hence for all vertices $v$ of $\Gamma$, the image $\pi_v(\tilde w)$ lies in $Geo_v(\$ Geo_v)^*$, and so $\tilde w = wxyx^{-1}w^{-1} \in Geo$.  By symmetry, the word $wxy^{-1}x^{-1}w^{-1}$ also is in 
$Geo$.

Since $wxyx^{-1}w^{-1}$ and $wxy^{-1}x^{-1}w^{-1}$ represent a pair of nontrivial, mutually inverse elements, one of them represents a positive element, which shows that $w \in Pref(Geo_+)$. Thus, 
$Geo \subseteq Pref(Geo_+)$.
\end{proof}

\subsection*{Acknowledgments} We thank the referee for their knowledgeable input.
The work of the first author was supported by the National Science Foundation under 
Grant No.~DMS-1313559.


\begin{thebibliography}{LMW02}

\bibitem[Ben69]{benois:free}
Mich\`ele Benois.
\newblock Parties rationnelles du groupe libre.
\newblock {\em C. R. Acad. Sci. Paris S\'er. A-B}, 269:A1188--A1190, 1969.

\bibitem[BS10]{bartholdi-s:rational}
Laurent Bartholdi and Pedro~V. Silva.
\newblock Rational subsets of groups.
\newblock Available at https://arxiv.org/abs/1012.1532, 2010.

\bibitem[Cal03]{calegari:problems3}
Danny Calegari.
\newblock Problems in foliations and laminations of {$3$}-manifolds.
\newblock In {\em Topology and geometry of manifolds ({A}thens, {GA}, 2001)},
volume~71 of {\em Proc. Sympos. Pure Math.}, pages 297--335. Amer. Math.
Soc., Providence, RI, 2003.

\bibitem[Chi12]{chiswell:ordering-graph-products}
I.~M. Chiswell.
\newblock Ordering graph products of groups.
\newblock {\em Internat. J. Algebra Comput.}, 22(4):1250037, 14, 2012.

\bibitem[CH14]{ciobanu-h:conjugacy-growth}
Laura Ciobanu and Susan Hermiller.
\newblock Conjugacy growth series and languages in groups.
\newblock {\em Trans. Amer. Math. Soc.}, 366(5):2803--2825, 2014.

\bibitem[Deh94]{dehornoy:order-braid}
Patrick Dehornoy.
\newblock Braid groups and left distributive operations.
\newblock {\em Trans. Amer. Math. Soc.}, 345(1):115--150, 1994.

\bibitem[DD01]{dubrovina-d:order-braid}
T.~V. Dubrovina and N.~I. Dubrovin.
\newblock On braid groups.
\newblock {\em Mat. Sb.}, 192(5):53--64, 2001.

\bibitem[Kie15]{kielak:ends}
Dawid Kielak.
\newblock Groups with infinitely many ends are not fraction groups.
\newblock {\em Groups Geom. Dyn.}, 9(1):317--323, 2015.

\bibitem[Lin06]{linnell:retracted}
Peter Linnell.
\newblock The topology on the space of left orderings of a group.
\newblock http://arxiv.org/abs/math/0607470, 2006.

\bibitem[LMW02]{loeffler-m-w:graph-products}
Joseph Loeffler, John Meier, and James Worthington.
\newblock Graph products and {C}annon pairs.
\newblock {\em Internat. J. Algebra Comput.}, 12(6):747--754, 2002.

\bibitem[McC85]{mccleary:free-lattice-ordered}
Stephen~H. McCleary.
\newblock Free lattice-ordered groups represented as {$o$}-{$2$} transitive
{$l$}-permutation groups.
\newblock {\em Trans. Amer. Math. Soc.}, 290(1):69--79, 1985.

\bibitem[Nav10]{navas:ordered-dynamics}
Andr{\'e}s Navas.
\newblock On the dynamics of (left) orderable groups.
\newblock {\em Ann. Inst. Fourier (Grenoble)}, 60(5):1685--1740, 2010.

\bibitem[Riv12]{rivas:free-product}
Crist{\'o}bal Rivas.
\newblock Left-orderings on free products of groups.
\newblock {\em J. Algebra}, 350:318--329, 2012.

\bibitem[RW00]{rourke-w:automatic}
Colin Rourke and Bert Wiest.
\newblock Order automatic mapping class groups.
\newblock {\em Pacific J. Math.}, 194(1):209--227, 2000.

\bibitem[{\v{S}}un13a]{sunic:free-lex}
Zoran {\v{S}}uni{\'c}.
\newblock Explicit left orders on free groups extending the lexicographic order
on free monoids.
\newblock {\em C. R. Math. Acad. Sci. Paris}, 351(13-14):507--511, 2013.

\bibitem[{\v{S}}un13b]{sunic:from-oriented}
Zoran {\v{S}}uni\'{c}.
\newblock Orders on free groups induced by oriented words.
\newblock Available at http://arxiv.org/abs/1309.6070, 2013.

\bibitem[Vin49]{vinogradov:ordered}
A.~A. Vinogradov.
\newblock On the free product of ordered groups.
\newblock {\em Mat. Sbornik N.S.}, 25(67):163--168, 1949.

\end{thebibliography}

\end{document}